\DeclareMathOperator{\id}{id}
\DeclareMathOperator{\gr}{gr}
\DeclareMathOperator{\Spec}{Spec}
\newcommand{\Z}{\mathbb{Z}}
\newcommand{\Q}{\mathbb{Q}}
\newcommand{\iu}{\mathfrak{i}_1}
\newcommand{\ba}[1]{\overline{#1}}
\newcommand{\ti}[1]{\widetilde{#1}}
\newcommand{\Kzl}{K_0}
\newcommand{\Kit}[2]{\Kzl({#2})_{({#1})}}
\newcommand{\Grt}[2]{\gr_{{#1}}\Kzl({#2})}
\newcommand{\Grts}{\gr_{\bullet}\Kzl}
\DeclareMathOperator{\CH}{CH}
\DeclareMathOperator{\Ch}{Ch}
\DeclareMathOperator{\Cht}{\ti{Ch}}
\newcommand{\CHQ}[2]{\CH_{#1}({#2})_{\Q}}
\newcommand{\CHZ}[2]{\CH_{#1}({#2})_{\Z}}
\newcommand{\CHQs}[1]{\CH({#1})_{\Q}}
\newcommand{\CHZs}[1]{\CH({#1})_{\Z}}
\newcommand{\Fc}{\mathcal{F}}
\newcommand{\Ec}{\mathcal{E}}
\newcommand{\Ecd}{\mathcal{E}_{\bullet}}
\newcommand{\Oc}{\mathcal{O}}
\newcommand{\Tan}{T}
\newcommand{\PP}{\mathbb{P}}
\newcommand{\cc}{c}
\DeclareMathOperator{\Square}{Sq}
\newcommand{\Squu}{\Square_1}
\newcommand{\Squ}{\Square^1}
\DeclareMathOperator{\Todd}{Td}
\DeclareMathOperator{\ch}{ch}
\newtheorem{theorem}{Theorem}[section]
\newtheorem{proposition}[theorem]{Proposition}
\newtheorem{lemma}[theorem]{Lemma}
\theoremstyle{definition}
\newtheorem{remark}[theorem]{Remark}
\begin{document}
\begin{abstract}
We construct a weak version of the homological first Steenrod square, a natural transformation from the modulo two Chow group to the Chow group modulo two and two-torsion. No assumption is made on the characteristic of the base field. As an application, we generalize a theorem of Nikita Karpenko on the parity of the first Witt index of quadratic forms to the case of a base field of characteristic two.   
\end{abstract}
\author{Olivier Haution}
\title{On the first Steenrod square for Chow groups}
\email{olivier.haution@gmail.com}
\address{Institut de Math\'ematiques de Jussieu, 175 rue du Chevaleret, 75013 Paris, France.}
\subjclass[2000]{14C40; 11E04}
\keywords{Steenrod operations, Riemann-Roch theorem, Chow groups, Witt indices}
\date{October 22, 2010}
\maketitle

\section{Introduction}
Good progress has been made lately towards a uniform treatment of the theory of (non-degenerate) quadratic forms, regardless of the fact the base field  has characteristic two or not. This approach is developed in the book \cite{EKM}. The main obstruction that remains, in order to accomplish this program, is that the Steenrod operations modulo two are not available when the base field has characteristic two. Indeed several constructions of the Steenrod operations for Chow groups modulo a prime number $p$ are known (\cite{EKM, Boi-A-08, Bro-St-03, Vo-03, Lev-St-05}), but none of them works over a field of characteristic $p$.

In this article we construct a weak version of the first homological Steenrod operation on modulo two Chow groups, over a field of arbitrary characteristic. More precisely, if $\Ch$ denotes the modulo two Chow group, and $\Cht$ the Chow group modulo its torsion subgroup, tensored with $\Z/2$, we construct group homomorphisms
\[
\Squu^X \colon \Ch(X) \to \Cht(X),
\]
for all separated schemes $X$ of finite type over a field. These morphisms commute with proper push-forwards, scalars extension and external product. They satisfy the formula
\[
\Squu^X[X]=\cc_1(\Tan_X)
\]
when $X$ is a smooth variety with tangent bundle $\Tan_X$, and $\cc_1$ the first Chern class.\\

Using the operation constructed here, we prove a statement about the parity of the first Witt index of a quadratic form over an arbitrary field (Theorem~\ref{th:i1}). When the base field is of characteristic different from two, this result is a particular case of a more precise statement known as Hoffmann's Conjecture, \cite[Proposition~79.4]{EKM}. But since we only construct the first Steenrod square, we only get a partial statement.\\

This work is part of my Ph.D.~thesis at the university of Paris 6 under the direction of Nikita Karpenko. I am very grateful to him for introducing me to the subject, raising the question studied here, and guiding me during this work. 

\section{Notations}
A \emph{variety} is a separated scheme of finite type over a field, which need not be irreducible, reduced nor quasi-projective. A \emph{regular variety} is a variety whose local rings are all regular local rings.\\

\paragraph{\bf Grothendieck group of coherent sheaves} Let $X$ be a variety. We shall write $\Kzl(X)$ for the Grothendieck group of coherent $\Oc_X$-sheaves. Such a sheaf $\Fc$ has a class $[\Fc] \in \Kzl(X)$, and if $f \colon X \to Y$ is a proper morphism of varieties, there is a push-forward map 
\[
f_* \colon \Kzl(X) \to \Kzl(Y) \quad , \quad [\Fc] \mapsto \sum_i (-1)^i [\mathsf{R}^if_*(\Fc)].
\]
If $f \colon X \to Y$ is flat, there is a pull-back $f^* \colon \Kzl(Y) \to \Kzl(X)$ induced by the tensor product with $\Oc_X$ over $\Oc_Y$.\\ 

\paragraph{\bf Topological filtration}
The group $\Kzl(X)$ is endowed with a topological filtration $0=\Kit{-1}{X} \subset \Kit{0}{X} \subset \cdots \subset \Kit{\dim X}{X} = \Kzl(X)$. For every integer $d$, $\Kit{d}{X}$ is the subgroup of $\Kzl(X)$ generated by classes of coherent $\Oc_X$-sheaves supported in dimension $\leq d$. This filtration is compatible with proper push-forwards. The associated graded group shall be denoted by $\Grt{\bullet}{X}$.\\

\paragraph{\bf External product} Let $X$ and $Y$ be varieties over a common field, and $p_X \colon X \times Y \to X$, $p_Y \colon X \times Y \to Y$ be the two projections. There is an external product 
\[
-\boxtimes -\colon \Kzl(X) \otimes \Kzl(Y) \to \Kzl(X \times Y)
\]
induced by the association 
\[
(\Ec,\Fc) \mapsto ({p_X}^*\Ec) \otimes_{\Oc_{X \times Y}} ({p_Y}^*\Fc).
\]
For any integers $m$ and $n$, we have (\cite[Lemma~82]{Gil-K-05})
\begin{equation*}
\Kit{m}{X} \boxtimes \Kit{n}{Y} \subset \Kit{m+n}{X \times Y}.
\end{equation*}

\paragraph{\bf Comparison with Chow groups} There is a morphism of graded abelian groups
\[
\varphi_X \colon \CH_{\bullet}(X) \to \Grt{\bullet}{X} \quad , \quad [Z] \mapsto [\Oc_Z] \mod \Kit{\dim Z-1}{X}
\]
which commutes with proper push-forwards (see \cite[Example~15.1.5]{Ful-In-98}). Let $\times$ denote the external product for Chow groups. If $X$ and $Y$ are varieties over a common field, and $x \in \Kzl(X)$, $y \in \Kzl(Y)$, we have
\begin{equation}
  \label{eq:phiext} \varphi_{X \times Y}(x \times y)=\varphi_X(x) \boxtimes \varphi_Y(y).
\end{equation}

\paragraph{\bf Scalars extension} For a variety $X$ over a field $F$, and $L/F$ a field extension, we shall write $X_L$ for the variety $X \times_{\Spec(F)} \Spec(L)$ over the field $L$. The flat morphism $X_L \to X$ induces pull-backs
\[
x \mapsto x_L, \quad \CH(X) \to \CH(X_L) \text{ and } \Kzl(X) \to \Kzl(X_L),
\]
which satisfy, for all $x \in \Kzl(X)$
\begin{equation}
  \label{eq:extension}
\varphi_{X_L}(x_L)=\varphi_X(x)_L.
\end{equation}

\section{Singular Grothendieck-Riemann-Roch Theorem}
For a variety $X$, we write $\CHQs{X}$ for $\CH(X) \otimes_{\Z} \Q$. An element $x \in \CH(X)$ has an image $x \otimes 1 \in \CHQs{X}$.

\begin{theorem}
  \label{th:RR}
  For all varieties $X$ there is a  homomorphism
  \[
  \tau^X \colon \Kzl(X) \to \CHQs{X} 
\]
with the following properties.
\begin{enumerate}[(a)]
  \item \label{proper} If $f\colon X \to Y$ is a proper morphism of varieties, we have 
    \[
    f_* \circ \tau^X=\tau^Y \circ f_*.
    \]

  \item \label{top} For an integral variety $X$, we have 
    \[
    \tau^X[\Oc_X]=[X] \mod \CHQ{<\dim X}{X}.
    \]
  \item \label{open} If $u \colon U \to X$ is an open embedding of quasi-projective varieties, then
   \[
\tau^U \circ u^*=u^* \circ \tau^X.
    \]
  \item \label{fieldext}If $X$ is a variety over a field $F$, $L/F$ a field extension, then for all $x \in \Kzl(X)$ we have in $\CHQs{X_L}$
    \[
    \tau^X(x)_L=\tau^{X_L}(x_L).
    \]
  \item \label{smooth} 
     Let $i \colon X \hookrightarrow M$ be a regular closed embedding, with $M$ a smooth variety. Let $N$ be the normal bundle of $i$ and $\Tan_M$ the tangent bundle of $M$. Then, writing $\Todd$ for the Todd class, we have
  \[
\tau^X[\Oc_X]=\Todd(i^*[\Tan_M]-[N]).
  \]
\end{enumerate}
\end{theorem}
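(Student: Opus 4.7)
The plan is to adapt the construction of Fulton's singular Riemann--Roch transformation from Chapter~18 of \cite{Ful-In-98}, where the analogous statement is established for arbitrary algebraic schemes. The present formulation is a direct reformulation of that result, so the argument carries over.

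First I would treat the quasi-projective case. Choose a closed embedding $i \colon X \hookrightarrow M$ into a smooth variety $M$. Any coherent $\Oc_X$-sheaf $\Fc$, pushed forward to $M$, admits a finite resolution by locally free $\Oc_M$-modules $\Ecd \to i_*\Fc$, which is exact outside $X$. The \emph{localized Chern character}, obtained via the MacPherson graph construction, assigns to such a complex a class $\ch_X^M(\Ecd) \in \CHQs{X}$, and I define
\[
\tau^X[\Fc] = \Todd(i^*[\Tan_M]) \cap \ch_X^M(\Ecd).
\]
The first task is to verify independence of the resolution and of the embedding; this is standard and proceeds by comparing two embeddings $X \hookrightarrow M_1$ and $X \hookrightarrow M_2$ via the diagonal $X \hookrightarrow M_1 \times M_2$, together with a deformation to the normal bundle. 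Property~(e) is then essentially built into the definition: when $i$ is a regular embedding, the Koszul complex produces $\ch_X^M(\Ecd) = \Todd([N])^{-1} \cap [X]$, and the stated formula follows after multiplication by $\Todd(i^*[\Tan_M])$.

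I would next verify the remaining functorial properties in the quasi-projective setting. For proper push-forwards~(a), one reduces (by factoring as a closed embedding followed by a smooth projection) to two cases: closed embeddings of smooth ambient varieties, where the definition is manifestly compatible, and projections $M \times \PP^n \to M$, where the result follows from the classical Grothendieck--Riemann--Roch theorem applied to the smooth map. Property~(c) on open embeddings is immediate from the fact that the localized Chern character is compatible with restriction to open subsets of the ambient $M$, and property~(d) on field extensions follows because the flat pull-back $M_L \to M$ is compatible with resolutions, with the Todd class of the tangent bundle, and with the MacPherson graph construction. Property~(b) is then deduced by applying~(e) on the smooth locus of an integral $X$ and gluing via~(c), since the difference $\tau^X[\Oc_X] - [X]$ is supported on the singular locus, which has strictly smaller dimension.

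The main obstacle is extending the construction to non-quasi-projective varieties while preserving~(a). Following Fulton, I would invoke Chow's lemma: any variety $X$ admits a proper birational surjection $p \colon \ti{X} \to X$ with $\ti{X}$ quasi-projective. Any class $\alpha \in \Kzl(X)$ can, after dévissage, be written as $p_*\ti{\alpha}$ for some $\ti{\alpha} \in \Kzl(\ti{X})$, and one sets $\tau^X(\alpha) = p_*\tau^{\ti{X}}(\ti{\alpha})$. The delicate point is well-definedness: two choices $(\ti{X}_1, \ti{\alpha}_1)$ and $(\ti{X}_2, \ti{\alpha}_2)$ must be compared by resolving the fibre product $\ti{X}_1 \times_X \ti{X}_2$ by a third quasi-projective variety $\ti{X}_3$ dominating both, at which point the quasi-projective case of~(a) forces the two resulting classes to agree. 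Once well-definedness is established, properties~(a), (b), (d) and~(e) propagate from the quasi-projective case to arbitrary varieties by applying $p_*$ to the corresponding identity on $\ti{X}$.
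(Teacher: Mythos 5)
Your overall strategy is the same as the paper's: both rest on Fulton's Chapter~18 construction of the singular Riemann--Roch map via localized Chern characters and descent by Chow's lemma. The difference is that the paper does not re-derive that construction; it cites \cite[Theorem~18.3]{Ful-In-98} for the existence of $\tau$ and for properties (a) and (b), and deduces (c) and (e) from the covariance statement \emph{loc.\ cit.}~(4) applied to $u\colon U \to X$ and to $X \to \mathsf{point}$ respectively. The only part requiring a genuinely new argument is the compatibility (d) with extension of scalars, which is absent from Fulton, and there your idea --- flat base change of the resolution, of $\Todd(\Tan_M)$ and of the localized Chern character in the quasi-projective case, then propagation through Chow's lemma --- is in substance the paper's proof. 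You should, however, make explicit that this propagation uses that the base change $X'_L \to X_L$ of a Chow envelope is again a Chow envelope (so that the identity can be checked on classes coming from the quasi-projective $X'$), which the paper takes from \cite[Lemma~18.3, (2)]{Ful-In-98}.

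Two steps of your reconstruction of Fulton's argument are genuinely problematic. First, your derivation of (b) by ``applying (e) on the smooth locus'' fails over an imperfect field: an integral variety need not be generically smooth there (its smooth locus can even be empty), and since the entire point of this paper is to work in characteristic two over arbitrary fields, that case cannot be excluded. The correct source of (b) is the computation of the top-dimensional term of the localized Chern character $\ch_X^M(\Ecd)$ at the generic points of the support of $\Fc$, which is a local statement requiring no smoothness. Second, the well-definedness of $\tau^X(\alpha) = p_*\tau^{\ti{X}}(\ti{\alpha})$ is more delicate than resolving $\ti{X}_1 \times_X \ti{X}_2$ by a third quasi-projective variety: two lifts $\ti{\alpha}_1$ and $\ti{\alpha}_2$ of $\alpha$ need not come from a common class on $\ti{X}_3$, so one must actually show that $p_* \circ \tau^{\ti{X}}$ annihilates the kernel of $p_* \colon \Kzl(\ti{X}) \to \Kzl(X)$; Fulton's argument for this is an induction on dimension using Chow envelopes. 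Neither issue is fatal --- both are resolved in \cite[Chapter~18]{Ful-In-98}, which is why the paper proceeds by citation --- but as written these two steps do not go through.
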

\begin{proof}
  The existence of the map $\tau^X$ satisfying \eqref{proper} and \eqref{top} is proven in \cite[Theorem~18.3]{Ful-In-98}. Applying \emph{loc.cit.}, (4) with $f \colon X \to \mathsf{point}$ the structure morphism, and $\alpha=1 \in \Kzl(\mathsf{point})$, we obtain \eqref{smooth}. Putting $f=u\colon U \to X$, we get \eqref{open}.

 We now prove \eqref{fieldext}. We first assume that $X$ is quasi-projective over $F$, and choose a closed embedding $i \colon X \hookrightarrow M$ into a smooth $F$-variety $M$. We can assume that $x=[\Fc]$ for some coherent $\Oc_X$-sheaf $\Fc$, and take a resolution $\Ecd \to i_*\Fc \to 0$ by locally free $\Oc_M$-sheaves. Then by \cite[Theorem~18.3, (3)]{Ful-In-98}
  \[
\tau^X(x)=\ch^M_X(\Ecd) \cap \Todd(\Tan_M),
  \]
  where $\Todd(\Tan_M)$ is the Todd class of tangent bundle of $M$, and $\ch^M_X(\Ecd)\cap - \colon \CHQs{M} \to \CHQs{X}$ the localized Chern character.
 
  We have a closed embedding $i_L \colon X_L \hookrightarrow M_L$, with $M_L$ smooth over $L$. Let $f \colon X_L \to X$, $g \colon M_L \to M$ be the morphisms induced by the field extension $L/F$. Then $x_L=[f^*\Fc]$, we have an isomorphism $g^* \circ i_*\Fc \simeq (i_L)_*\circ f^*\Fc$, and a resolution $g^*\Ecd \to (i_L)_*\circ f^*\Fc \to 0$. The tangent bundle $\Tan_{M_L}$ of $M_L$ is isomorphic to $g^*\Tan_M=(\Tan_M)_L$, therefore using \cite[Theorem~18.1]{Ful-In-98}, we have
\[
\tau^{X_L}(x_L)=\ch^{M_L}_{X_L}(g^*\Ecd) \cap \Todd(g^*\Tan_{M})=f^*\big( \ch^{M}_{X}(\Ecd) \cap \Todd(\Tan_{M})\big)=\tau^{X}(x)_L.
\]

  Now we drop the assumption that $X$ is quasi-projective over $F$. Let $p \colon X' \to X$ be a Chow envelope.  The map $p_* \colon \Kzl(X') \to \Kzl(X)$ is surjective, let $y$ be an antecedent of $x$. Then we have, by compatibility with proper push-forwards
\[
  \tau^X(x)=\tau^X\circ p_*(y)=p_*\circ \tau^{X'}(y).
  \]
  The fiber product $p_L \colon {X'}_L \to X_L$ is also a Chow envelope by \cite[Lemma~18.3, (2)]{Ful-In-98}, and 
  \[
  \tau^{X_L}(x_L)= \tau^{X_L}\big( p_*(y)_L\big)=\tau^{X_L}\circ (p_L)_*(y_L)=p_* \circ \tau^{ {X'}_L}(y_L).
  \]
  The latter is equal to $p_* \circ \tau^{X'}(y)_L$ by the quasi-projective case, and~\eqref{fieldext} follows.
\end{proof}

The morphism $\tau^X$ is the \emph{homological Chern character}. Individual components
\[
\tau^X_k \colon \Kzl(X) \to \CHQ{k}{X}
\]
are defined by composing with the projections $\CHQs{X} \to \CHQ{k}{X}$.

\section{A $2$-integrality property of the homological Chern character}
The result obtained in this section, Theorem~\ref{cor:ada}, can be thought of as an algebraic analog of \cite[Theorem~1]{Ada-On-61} (with $r=1$).\\

Let $X$ be a variety. We denote by $\CHZs{X} \subset \CHQs{X}$ the Chow group of $X$ modulo its torsion subgroup, and view it as the image of the map $\CH(X) \to \CHQs{X}$ given by $x \mapsto x \otimes 1$.

\begin{lemma}
  \label{lemm:reg}
Let $X$ be a regular, connected and quasi-projective variety of dimension $d$. Then
  \[
  2\cdot\tau^X_{d-1}[\Oc_X] \in \CHZ{d-1}{X}.
  \]
\end{lemma}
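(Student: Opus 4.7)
The plan is to apply property~\eqref{smooth} of Theorem~\ref{th:RR} to a closed embedding of $X$ into a suitable smooth ambient variety, and then to extract the codimension-one part of the resulting Todd class.

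Since $X$ is quasi-projective, I can find a closed embedding $i \colon X \hookrightarrow M$ with $M$ a smooth quasi-projective variety: take $M$ to be an open subscheme of a projective space containing $X$ as a closed subscheme. Regularity of $X$ together with smoothness of $M$ implies that $i$ is a regular closed embedding (its ideal sheaf is locally generated by a regular sequence). Let $N$ denote the normal bundle of $i$. By Theorem~\ref{th:RR}~\eqref{smooth},
\[
\tau^X[\Oc_X] = \Todd\bigl(i^*[\Tan_M]-[N]\bigr).
\]
Writing $v = i^*[\Tan_M]-[N]$, this is a virtual bundle of rank $d$. Expanding the Todd class, the codimension-zero term is $1$ (so contributes $[X]$ in dimension $d$), and the codimension-one term is $\tfrac{1}{2}\cc_1(v)$. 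Extracting the dimension-$(d-1)$ component therefore yields
\[
\tau^X_{d-1}[\Oc_X] = \tfrac{1}{2}\,\cc_1(v)\cap [X] \in \CHQ{d-1}{X}.
\]

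To conclude, I would observe that $\cc_1(v) = \cc_1(i^*\Tan_M) - \cc_1(N)$ is a difference of first Chern classes of genuine vector bundles on $X$, so $\cc_1(v) \cap [X]$ is the image in $\CHQ{d-1}{X}$ of an integral class in $\CH_{d-1}(X)$. Doubling the previous equality then gives
\[
2\,\tau^X_{d-1}[\Oc_X] = \cc_1(v)\cap[X] \in \CHZ{d-1}{X},
\]
which is the desired conclusion. The only nontrivial ingredient is that $i$ is a regular embedding, and this is a standard consequence of regularity of $X$ and smoothness of $M$; so I do not anticipate any serious obstacle.
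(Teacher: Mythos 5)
Your proposal is correct and follows essentially the same route as the paper: embed the quasi-projective regular $X$ as a regular closed subscheme of a smooth variety, apply Theorem~\ref{th:RR}~\eqref{smooth}, and use that the degree-one term of the Todd class is half the first Chern class of the virtual normal bundle, which is an integral class. No substantive difference from the paper's argument.
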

\begin{proof} 
  Since $X$ is quasi-projective, we can find a smooth connected variety $M$ and a closed embedding $j \colon X \hookrightarrow M$. The morphism $j$ is a regular closed embedding, as is any closed embedding of regular varieties, (\cite[Proposition~2, \S5, N$^{\circ}$3, p.65]{Bou-AC-10}). Letting $N$ be its normal bundle and $\Tan_M$ the tangent bundle of $M$, we can apply Theorem~\ref{th:RR}, \eqref{smooth}. Using \cite[Example~3.2.4]{Ful-In-98} asserting that the first term of the Todd class is the half of the first Chern class $\cc_1$, we obtain in $\CHQs{X} \mod \CHQ{\leq \dim X -2}{X}$ the congruence
  \begin{align*}
   \tau^X[\Oc_X]=\Todd(j^*\Tan_M-N)&=\big(\id+\cc_1(j^*\Tan_M) \otimes 2^{-1}\big)\cdot\big(\id+\cc_1(N)\otimes 2^{-1}\big)^{-1}[X]\\
   &=[X] + \cc_1(j^*\Tan_M - N) \otimes 2^{-1}.
 \end{align*}
The statement follows.
\end{proof}

\begin{lemma}
  \label{lemm:normal}
  Let $X$ be a normal, connected and quasi-projective variety of dimension $d$. Then
  \[
  2\cdot \tau^X_{d-1}[\Oc_X] \in \CHZ{d-1}{X}.
  \]
 \end{lemma}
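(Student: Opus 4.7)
The plan is to reduce the statement to the regular case already handled in Lemma~\ref{lemm:reg} by restricting to the regular locus of $X$, the guiding idea being that for a normal variety the singular locus sits in codimension at least two, so it is invisible to $\CH_{d-1}$. Concretely, since $X$ is normal and connected it is irreducible; by Serre's criterion its singular locus $Z\subset X$ has $\dim Z\leq d-2$. Let $U=X\setminus Z$, an open, connected, quasi-projective, regular subvariety of dimension $d$, and let $u\colon U\hookrightarrow X$ denote the inclusion. Lemma~\ref{lemm:reg} applied to $U$ gives $2\cdot\tau^U_{d-1}[\Oc_U]\in\CHZ{d-1}{U}$.

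Next I would transport this back to $X$. By Theorem~\ref{th:RR}~\eqref{open} and the identity $u^*[\Oc_X]=[\Oc_U]$, we have $u^*\tau^X[\Oc_X]=\tau^U[\Oc_U]$ in $\CHQs{U}$; selecting the component of dimension $d-1$,
\[
u^*\bigl(2\cdot\tau^X_{d-1}[\Oc_X]\bigr)\;=\;2\cdot\tau^U_{d-1}[\Oc_U]\;\in\;\CHZ{d-1}{U}.
\]
The localization exact sequence $\CH_{d-1}(Z)\to\CH_{d-1}(X)\xrightarrow{u^*}\CH_{d-1}(U)\to 0$ has trivial left term because $\dim Z<d-1$, so $u^*$ is an isomorphism at dimension $d-1$; since $\Q$ is flat, the induced map $u^*\colon\CHQ{d-1}{X}\to\CHQ{d-1}{U}$ is also an isomorphism. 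As $\CHZ{d-1}{X}$ and $\CHZ{d-1}{U}$ are, by definition, the images of the natural maps $\CH_{d-1}(X)\to\CHQ{d-1}{X}$ and $\CH_{d-1}(U)\to\CHQ{d-1}{U}$, the isomorphism $u^*$ restricts to a bijection between them. Pulling the displayed relation back along this restricted isomorphism yields $2\cdot\tau^X_{d-1}[\Oc_X]\in\CHZ{d-1}{X}$, as required.

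I do not expect a real obstacle here: once normality has been exploited to place the singular locus in codimension at least two, the rest is essentially formal bookkeeping. The only subtle point worth noting is that, because the target of the integrality statement is $\CHZ{d-1}{X}$ and not merely $\CHQ{d-1}{X}$, one must apply the localization sequence at the integral level—and only afterwards tensor with $\Q$—in order to identify $\CHZ{d-1}{X}$ with $\CHZ{d-1}{U}$ rather than just their rationalizations.
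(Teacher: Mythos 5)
Your proof is correct and follows essentially the same route as the paper's: restrict to the regular locus $U$ (whose complement has codimension at least two by normality), apply Lemma~\ref{lemm:reg} there, and use Theorem~\ref{th:RR}~\eqref{open} together with the localization sequence to descend the integrality statement to $X$. The paper phrases the final step by lifting an explicit integral cycle $y_U$ from $U$ to $X$ and showing the difference $2\cdot\tau^X_{d-1}[\Oc_X]-y\otimes 1$ lies in $\CHQ{<d-1}{X}$ and hence vanishes, which is the same use of $\dim S\leq d-2$ as your observation that $u^*$ is an isomorphism on $\CH_{d-1}$.
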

\begin{proof} 
The set $S$ of points $x \in X$ such that $\Oc_{X,x}$ is not a regular local ring is closed in $X$ (\cite[Corollaire~4, \S7, N$^{\circ}$9, p.102]{Bou-AC-10}). We consider $S$ as a closed subscheme of $X$, by endowing it with the reduced scheme structure. Since $X$ is normal, the subscheme $S$ has codimension at least two in $X$.

Let $u\colon U \to X$ be the open complement of $S$ in $X$. The variety $U$ is regular, connected and quasi-projective, hence Lemma~\ref{lemm:reg} applies to $U$: we find an integral cycle $y_U \in \CH_{d-1}(U)$ satisfying $2\cdot\tau^U_{d-1}[\Oc_U]=y_U \otimes 1$. We have the localization sequence 
\[
\CH(S) \to \CH(X) \xrightarrow{u^*} \CH(U) \to 0.
\]
Let $y \in \CH_{d-1}(X)$ be such that $u^*(y)=y_U$. By Theorem~\ref{th:RR}, \eqref{open}, we have 
\[
u^* (2\cdot \tau^X_{d-1}[\Oc_X]-y \otimes 1)=2\cdot \tau^U_{d-1}[\Oc_U]-y_U \otimes 1=0,
\]
hence $2\cdot \tau^X_{d-1}[\Oc_X]-y \otimes 1$ belongs to the image of $\CHQs{S} \to \CHQs{X}$, which is contained in $\CHQ{<d-1}{X}$. But $2\cdot\tau^X_{d-1}[\Oc_X]-y\otimes 1$ belongs to $\CHQ{d-1}{X}$, hence it is zero. 
\end{proof}

\begin{proposition}
   \label{prop:ada1}
 Let $X$ be an integral variety of dimension $d$. Then
  \[
  2\cdot \tau^X_{d-1}[\Oc_X] \in \CHZ{d-1}{X}.
  \]
\end{proposition}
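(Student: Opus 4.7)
The plan is to reduce to Lemma~\ref{lemm:normal} by covering $X$ with a proper birational morphism $p \colon X' \to X$ whose source is an integral, normal, quasi-projective variety of dimension $d$. Such a cover exists by combining Chow's lemma (which produces a proper birational morphism to $X$ from an integral quasi-projective variety) with the normalization of its source (which is finite, hence preserves quasi-projectivity).

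Before performing the reduction I would record the auxiliary observation that $\tau^X_{d-1}$ sends $\Kit{d-1}{X}$ into $\CHZ{d-1}{X}$; no factor of $2$ is required here. By dévissage, $\Kit{d-1}{X}$ is generated by classes $j_*[\Oc_Z]$, where $j \colon Z \hookrightarrow X$ runs over closed embeddings of integral subvarieties with $\dim Z \leq d-1$. Property~\eqref{proper} yields $\tau^X(j_*[\Oc_Z]) = j_*\tau^Z[\Oc_Z]$, and property~\eqref{top} implies that the degree-$(d-1)$ component of this push-forward equals $j_*[Z]$ when $\dim Z = d-1$ and vanishes when $\dim Z < d-1$; in either case it lies in $\CHZ{d-1}{X}$.

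The conclusion is then a short computation. Lemma~\ref{lemm:normal} applied to $X'$ gives $2 \cdot \tau^{X'}_{d-1}[\Oc_{X'}] \in \CHZ{d-1}{X'}$, whose image under the proper push-forward $p_* \colon \CHZ{d-1}{X'} \to \CHZ{d-1}{X}$ equals $2 \cdot \tau^X_{d-1}(p_*[\Oc_{X'}])$ by property~\eqref{proper}. Since $p$ is proper and birational between integral varieties of the same dimension, the sheaves $R^ip_*\Oc_{X'}$ for $i \geq 1$ and the quotient $p_*\Oc_{X'}/\Oc_X$ are all supported in dimension strictly less than $d$, so $p_*[\Oc_{X'}] - [\Oc_X] \in \Kit{d-1}{X}$. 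By the auxiliary observation, $2\cdot\tau^X_{d-1}(p_*[\Oc_{X'}]-[\Oc_X])$ is integral, and subtracting gives $2\cdot\tau^X_{d-1}[\Oc_X] \in \CHZ{d-1}{X}$.

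The main obstacle is the construction of the cover $p \colon X' \to X$: the proposition places no quasi-projectivity hypothesis on $X$, whereas Lemma~\ref{lemm:normal} does. Chow's lemma bridges this gap, and normalization then restores the normality that may be lost to Chow's construction. Once the cover is in hand, the rest of the argument is a purely formal manipulation using Theorem~\ref{th:RR}\eqref{proper} together with the dévissage fact recorded above.
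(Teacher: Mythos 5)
Your proposal is correct and follows essentially the same route as the paper: Chow's Lemma followed by normalization to obtain a proper birational cover by an integral, normal, quasi-projective variety, then Lemma~\ref{lemm:normal} plus the d\'evissage observation that $\tau^X_{d-1}$ carries $\Kit{d-1}{X}$ into $\CHZ{d-1}{X}$ to handle the error term $p_*[\Oc_{X'}]-[\Oc_X]$. The only cosmetic difference is that the paper isolates your auxiliary observation as a separate statement (Lemma~\ref{lemm:dim}) and cites it, whereas you prove it inline.
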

\begin{proof}
  First, using Chow's Lemma \cite[II, Th\'eor\`eme~5.6.1]{ega-2}, choose a projective birational morphism $e \colon X' \to X$, with $X'$ quasi-projective and integral. Let $n \colon \ti{X} \to X'$ be the normalization of $X'$. It is a finite birational morphism (\cite[Th\'eor\`eme~2, Chapitre~V, \S3, N$^\circ$2, p.59]{Bou-Ac-47}), with $\ti{X}$ normal, connected, and quasi-projective. Setting $p=e \circ n$, we have $p_*[\Oc_{\ti{X}}]=[\Oc_X]+x$ for some element $x \in \Kit{d-1}{X}$. Then 
  \[
  2\cdot \tau^X_{d-1}[\Oc_X]=2\cdot p_*\circ \tau^{\ti{X}}_{d-1}[\Oc_{\ti{X}}]-2\cdot \tau^X_{d-1}(x).
  \]
  We conclude by applying Lemma~\ref{lemm:normal} to $\ti{X}$, and using Lemma~\ref{lemm:dim} below.
\end{proof}

\begin{lemma}
  \label{lemm:dim}
  Let $X$ be a variety and $x \in \Kit{k-1}{X}$. Then 
\[
 \tau^X_{k-1}(x) \in \CHZ{k-1}{X}.
\]
\end{lemma}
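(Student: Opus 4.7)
The plan is to reduce, by linearity and devissage, to the case $x = [i_*\Oc_Z]$ for $i \colon Z \hookrightarrow X$ the embedding of an integral closed subscheme with $\dim Z \leq k-1$, and then invoke parts \eqref{proper} and \eqref{top} of Theorem~\ref{th:RR}.

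First I would write $x$ as a $\Z$-linear combination of classes $[\Fc]$ for coherent sheaves $\Fc$ supported in dimension $\leq k-1$. The standard devissage on the Noetherian scheme $X$ (filter $\Fc$ iteratively by the ideal sheaves of top-dimensional integral components of its support) allows one to rewrite such a class in $\Kzl(X)$ as a finite integer combination $\sum_j n_j [i_{j*}\Oc_{Z_j}]$, with each $i_j\colon Z_j \hookrightarrow X$ an integral closed subscheme and $\dim Z_j \leq k-1$. This reduces the lemma to the case of an integral closed subscheme.

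For each such $j$, property \eqref{proper} of Theorem~\ref{th:RR} gives
\[
\tau^X\bigl([i_{j*}\Oc_{Z_j}]\bigr) = (i_j)_*\,\tau^{Z_j}\bigl([\Oc_{Z_j}]\bigr).
\]
Since proper push-forward is graded with respect to dimension, the $(k-1)$-th component of the right-hand side is $(i_j)_*\bigl(\tau^{Z_j}_{k-1}[\Oc_{Z_j}]\bigr)$. If $\dim Z_j < k-1$, this vanishes for dimension reasons, as $\CHQ{k-1}{Z_j} = 0$. If $\dim Z_j = k-1$, property \eqref{top} identifies $\tau^{Z_j}_{k-1}([\Oc_{Z_j}])$ with the image of $[Z_j] \in \CH_{k-1}(Z_j)$, which lies in $\CHZ{k-1}{Z_j}$; its push-forward to $X$ then lies in $\CHZ{k-1}{X}$. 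Summing over $j$ concludes.

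The only real point of care is the devissage, which is classical for coherent sheaves on a Noetherian scheme (it is the same mechanism underlying the construction of the cycle map $\varphi_X$ recalled in the Notations section). Once granted, the lemma is a direct consequence of the functoriality \eqref{proper} and the top-degree normalization \eqref{top} built into Theorem~\ref{th:RR}.
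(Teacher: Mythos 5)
Your argument is correct and follows essentially the same route as the paper: reduce by d\'evissage to generators $i_*[\Oc_Z]$ with $Z$ integral of dimension $\leq k-1$, then apply parts \eqref{proper} and \eqref{top} of Theorem~\ref{th:RR} to identify the $(k-1)$-component as either $0$ or $i_*[Z]\otimes 1$. The paper simply takes the generation of $\Kit{k-1}{X}$ by such classes as known, whereas you spell out the d\'evissage; the substance is identical.
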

\begin{proof}
  The group $\Kit{k-1}{X}$ is generated by the elements $i_*[\Oc_Z]$ with $Z$ an integral variety of dimension $\leq k-1$, and $i \colon Z \hookrightarrow X$ a closed embedding. We can assume that $x$ is such a generator. Then by Theorem~\ref{th:RR}, \eqref{proper} we have $\tau^X_{k-1}(x)=i_* \circ \tau^Z_{k-1}[\Oc_Z]$. In view of Theorem~\ref{th:RR}, \eqref{top}, this element is either zero (when $\dim Z < k-1$) or $i_*[Z]\otimes 1$ (when $\dim Z=k-1$). Therefore it belongs to $\CHZ{k-1}{X}$.
\end{proof}

\begin{theorem}
  \label{cor:ada}
  Let $X$ be a variety, and $x \in \Kit{k}{X}$. Then 
  \[
  2 \cdot \tau^X_{k-1}(x) \in \CHZ{k-1}{X}.
  \]
 \end{theorem}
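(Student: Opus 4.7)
The plan is to reduce the statement to Proposition~\ref{prop:ada1} by decomposing a class in $\Kit{k}{X}$ along the generators of the topological filtration, exactly as in Lemma~\ref{lemm:dim}.

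First, recall that $\Kit{k}{X}$ is generated by elements of the form $i_*[\Oc_Z]$, where $Z$ is an integral variety of dimension $\leq k$ and $i \colon Z \hookrightarrow X$ is the corresponding closed embedding. Since $\tau^X_{k-1}$ is a group homomorphism and $\CHZ{k-1}{X} \subset \CHQ{k-1}{X}$ is a subgroup, it suffices to establish the conclusion when $x = i_*[\Oc_Z]$ is such a generator.

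Now I split on the dimension of $Z$. If $\dim Z \leq k-1$, then $x \in \Kit{k-1}{X}$, and Lemma~\ref{lemm:dim} gives directly $\tau^X_{k-1}(x) \in \CHZ{k-1}{X}$, so a fortiori $2 \cdot \tau^X_{k-1}(x)$ lies there. If $\dim Z = k$, I apply Proposition~\ref{prop:ada1} to the integral variety $Z$, obtaining $2 \cdot \tau^Z_{k-1}[\Oc_Z] \in \CHZ{k-1}{Z}$. Push this containment forward by $i$: the map $i_*\colon \CH_{k-1}(Z) \to \CH_{k-1}(X)$ sends $\CHZ{k-1}{Z}$ into $\CHZ{k-1}{X}$, because pushforward commutes with the tensor product by $\Q$. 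Moreover, by Theorem~\ref{th:RR}~\eqref{proper} and the fact that proper pushforwards preserve the dimension-grading on $\CHQs{-}$, we have $i_* \circ \tau^Z_{k-1} = \tau^X_{k-1} \circ i_*$. Hence $2 \cdot \tau^X_{k-1}(i_*[\Oc_Z]) = i_*(2 \cdot \tau^Z_{k-1}[\Oc_Z]) \in \CHZ{k-1}{X}$.

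The substantive content -- the appearance of the factor of $2$ coming from the first Todd coefficient -- has already been isolated in Proposition~\ref{prop:ada1}; hence I do not expect any real obstacle here, only the bookkeeping needed to pass from integral varieties at top dimension to arbitrary classes in $\Kit{k}{X}$ at dimension $k$. The only minor care needed is to distinguish the two cases $\dim Z = k$ and $\dim Z < k$, since only the former invokes Proposition~\ref{prop:ada1} while the latter is already torsion-free without the factor of $2$ by Lemma~\ref{lemm:dim}.
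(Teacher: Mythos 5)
Your proposal is correct and follows essentially the same route as the paper: decompose $\Kit{k}{X}$ into $\Kit{k-1}{X}$ (handled by Lemma~\ref{lemm:dim}) plus generators $i_*[\Oc_Z]$ with $Z$ integral of dimension $k$, to which Proposition~\ref{prop:ada1} applies via compatibility of $\tau$ with proper push-forwards. No issues.
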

 \begin{proof}
   The group $\Kit{k}{X}$ is generated by the subgroup $\Kit{k-1}{X}$ and elements of type $i_*[\Oc_Z]$ with $i \colon Z \hookrightarrow X$ an integral closed subvariety of dimension $k$. Applying Lemma~\ref{lemm:dim}, we can assume that $x$ is an element of this type. Then by Theorem~\ref{th:RR}, \eqref{proper}, we have $2 \cdot \tau^X_{k-1}(x)=i_* (2 \cdot \tau^Z_{k-1}[\Oc_Z])$, which belongs to $i_*\CHZ{k-1}{Z} \subset \CHZ{k-1}{X}$ because of Proposition~\ref{prop:ada1}. 
 \end{proof}

\section{The first homological Steenrod square}
We define functors from the category of varieties and proper morphisms to the category of graded abelian groups by setting, for every variety $X$, $\Ch(X)=  \Z/2\otimes\CH(X)$ and 
\[
\Cht(X)=\CH(X)/(2\text{-torsion}+ 2 \CH(X))=\Z/2 \otimes \CHZs{X}.
\]
There is a natural surjective map $\Ch(X) \to \Cht(X)$, $x \mapsto \ti{x}$.\\

Let $X$ be a variety. We define a homomorphism of abelian groups
\[
s_k \colon \Kit{k}{X} \to \CHZ{k-1}{X} \to \Cht_{k-1}(X),
\]
the first map being the association $x \mapsto 2\cdot \tau^X_{k-1}(x) \in \CHQ{k-1}{X}$, which has image in the subgroup $\CHZ{k-1}{X}$ by Theorem~\ref{cor:ada}, and the second map being the reduction modulo two. 

Now take $x \in \Kit{k-1}{X} \subset \Kit{k}{X}$. By Lemma~\ref{lemm:dim}, we have $2\cdot\tau^X_{k-1}(x) \in 2\cdot\CHZ{k-1}{X}$, hence $s_k(x)=0 \in  \Cht_{k-1}(X)$. This gives a homomorphism of graded abelian groups
\[
\Z/2 \otimes \Grts(X) \to \Cht_{\bullet-1}(X),
\]
which commutes with proper push-forwards by Theorem~\ref{th:RR}, \eqref{proper}.

We compose with the natural transformation $\id_{\Z/2} \otimes \varphi  \colon \Ch_\bullet \to \Z/2 \otimes \Grts$, and obtain a natural transformation of functors from the category of varieties and proper morphisms to the category of graded abelian groups
\[
\Squu \colon \Ch_\bullet \to \Cht_{\bullet-1}.
\]
For a variety $X$, we write $\Squu^X \colon \Ch_\bullet(X) \to \Cht_{\bullet-1}(X)$ for the induced morphism.

\begin{remark}
  \label{rem:sq1descend}
One can expect the map $\Squu$ to lift to a morphism 
\[
\Ch_\bullet \to \Ch_{\bullet -1}.
\]
However one cannot expect that $\Squu$ descends to a morphism
\[
\Cht_\bullet \to \Cht_{\bullet -1},
\]
as suggested by the following example.

Let $X$ be an anisotropic projective $3$-dimensional quadric over a field $F$ of characteristic not two, defined by a quadratic form of type $\langle \langle a,b \rangle \rangle\perp c$. In this case, by~\cite[Theorem~5.3]{Kar-90} there is an element $l_0 \in \Kit{1}{X}$ such that $l_0 \notin \Kit{0}{X}$ and $2l_0 \in \Kit{0}{X}$. Note that $\varphi_X \colon  \CH_\bullet(X) \to \Grts(X)$ is an isomorphism : this is a general fact concerning smooth varieties in codimension $0,1$ and $2$ (\cite[Example~15.3.6]{Ful-In-98}); in codimension $3$ this follows from the fact that $\CH_0(X) \simeq \Z$ (\cite[Corollary~71.4]{EKM}) and the fact that $\varphi_X$ has torsion kernel (\cite[Example~15.3.6]{Ful-In-98}). Let $x \in \Ch_1(X)$ be the antecedent of $l_0 \mod \Kit{0}{X}$ under $\varphi_X$, and $y \in \Ch_0(X)$ the class of a point of degree two. We have $\varphi_X(y)=2 l_0$, and using Theorem~\ref{th:RR}, \eqref{top}
\[
\tau^X(l_0)=2^{-1} \tau^X(2l_0)=y \otimes 2^{-1}
\]
hence $\Squu^X(x)=y$, which is non-zero in $\Cht_0(X)$. On the other hand $x$ is zero in $\Cht_1(X)$.
\end{remark}

\begin{proposition}
  \label{prop:extsquu}
Let $X$ and $Y$ be two varieties over the same field, $x \in \CH_n(X)$, and $y \in \CH_m(Y)$. We have  
\[
\Squu^{X \times Y}(x \times y)=\ti{x} \times \Squu^Y(y) + \Squu^X(x) \times \ti{y}.
\]
\end{proposition}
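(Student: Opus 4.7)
The plan is to reduce to a multiplicativity statement for the homological Chern character $\tau$ and then isolate the appropriate graded components. Writing $x$ as a $\Z$-linear combination of classes of integral subvarieties of dimension $n$ gives a canonical lift $\alpha \in \Kit{n}{X}$ of $\varphi_X(x)$, and similarly we pick $\beta \in \Kit{m}{Y}$ lifting $\varphi_Y(y)$. The filtration estimate $\Kit{n}{X} \boxtimes \Kit{m}{Y} \subset \Kit{n+m}{X \times Y}$ combined with \eqref{eq:phiext} shows that $\alpha \boxtimes \beta$ lifts $\varphi_{X \times Y}(x \times y)$, so by the very definition of $\Squu$,
\[
\Squu^{X \times Y}(x \times y) = 2 \cdot \tau^{X \times Y}_{n+m-1}(\alpha \boxtimes \beta) \mod 2.
\]

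The main input is the multiplicativity of $\tau$ under external products, a standard property of the Riemann-Roch map (see \cite[Theorem~18.3]{Ful-In-98}): $\tau^{X \times Y}(\alpha \boxtimes \beta) = \tau^X(\alpha) \times \tau^Y(\beta)$. To extract the degree $n+m-1$ component, I would use that $\alpha$ is a $\Z$-linear combination of pushforwards $i_*[\Oc_Z]$ with $Z$ integral of dimension at most $n$; together with Theorem~\ref{th:RR}, \eqref{proper} and \eqref{top}, this forces $\tau^X_i(\alpha) = 0$ for $i > n$ and $\tau^X_n(\alpha) = x \otimes 1$, and analogously for $\beta$. Exactly two cross-terms then survive:
\[
\tau^{X \times Y}_{n+m-1}(\alpha \boxtimes \beta) = (x \otimes 1) \times \tau^Y_{m-1}(\beta) + \tau^X_{n-1}(\alpha) \times (y \otimes 1).
\]

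Finally I would multiply by two and reduce modulo two. By Theorem~\ref{cor:ada}, both $2 \cdot \tau^X_{n-1}(\alpha)$ and $2 \cdot \tau^Y_{m-1}(\beta)$ already lie in the integer Chow groups, and their classes modulo two equal $\Squu^X(x)$ and $\Squu^Y(y)$ respectively, by the very definition of $\Squu$. Reducing the displayed identity modulo two then yields the Cartan-type formula of the statement. The main obstacle is really organizational: tracking which graded components of $\tau^X(\alpha)$ and $\tau^Y(\beta)$ can combine into total degree $n+m-1$, and correctly identifying the top-degree component $\tau^X_n(\alpha) = x \otimes 1$. Once the vanishing $\tau^X_{>n}(\alpha) = 0$ and this identification are in hand, the rest is formal.
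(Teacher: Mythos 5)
Your argument is correct and follows essentially the same route as the paper: both rest on the compatibility \eqref{eq:phiext}, the multiplicativity of $\tau$ under external products (which in Fulton is stated as Example~18.3.1 rather than Theorem~18.3 itself), the vanishing $\tau^X_{>n}=0$ with top component $x\otimes 1$ from Theorem~\ref{th:RR}~\eqref{proper} and \eqref{top}, and extraction of the degree $n+m-1$ cross-terms before reducing modulo two. Your explicit handling of the lifts $\alpha$, $\beta$ is a slightly more careful rendering of the same computation, not a different method.
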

\begin{proof}
We know by Theorem~\ref{th:RR},~\eqref{top} that $\tau^X_k(x)=0$ and $\tau^Y_l(y)$ for $k>n$ and $l>m$. Then, using \cite[Example~18.3.1]{Ful-In-98} and \eqref{eq:phiext}, we compute
  \begin{align*}
    &2 \cdot \tau^{X \times Y}_{m+n-1} \circ \varphi_{X \times Y}(x \times y)\\
    =&2 \cdot \tau^{X \times Y}_{m+n-1}\big(\varphi_X(x) \boxtimes \varphi_Y(y)\big)\\
  =&\tau_m^X\circ \varphi_X(x) \times 2\cdot \tau_{n-1}^Y\circ \varphi_Y(y) + 2\cdot \tau_{m-1}^X\circ \varphi_X(x) \times \tau_n^Y\circ \varphi_Y(y)\\
  =&(x \otimes 1) \times 2\cdot \tau_{n-1}^Y\circ \varphi_Y(y) + 2\cdot \tau_{m-1}^X\circ \varphi_X(x) \times (y\otimes 1). \qedhere
\end{align*}
\end{proof}

\begin{proposition}
  \label{prop:wu}
  Let $X$ be a smooth variety, with tangent bundle $\Tan_X$. Then 
\[
\Squu^X[X]=\ti{\cc_1(\Tan_X)}.
\]
\end{proposition}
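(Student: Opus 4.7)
The strategy is to combine the definition of $\Squu^X$ with Theorem~\ref{th:RR}\eqref{smooth}. Setting $d=\dim X$, observe that $\varphi_X([X]) = [\Oc_X] \mod \Kit{d-1}{X}$, so by construction of $\Squu$ we have
\[
\Squu^X[X] = \big(2\cdot\tau^X_{d-1}[\Oc_X]\big) \mod 2.
\]
The task therefore reduces to computing the component of $\tau^X[\Oc_X]$ in dimension $d-1$.

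To invoke Theorem~\ref{th:RR}\eqref{smooth}, I take the diagonal embedding $\Delta \colon X \hookrightarrow X \times X$. Since $X$ is smooth, $\Delta$ is a regular closed embedding into the smooth variety $X \times X$, with normal bundle canonically isomorphic to $\Tan_X$. The tangent bundle of $X \times X$ restricts along $\Delta$ to $\Delta^*\Tan_{X \times X} \simeq \Tan_X \oplus \Tan_X$, so $\Delta^*[\Tan_{X\times X}] - [\Tan_X] = [\Tan_X]$. The formula of \eqref{smooth} then yields
\[
\tau^X[\Oc_X] = \Todd(\Tan_X).
\]
(Equivalently, one can apply \eqref{smooth} to $i=\id_X$, a regular closed embedding with zero normal bundle, into the smooth ambient variety $X$ itself.)

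Finally I extract the codimension-one component. By \cite[Example~3.2.4]{Ful-In-98}, the degree-one part of the Todd class is $\tfrac12\cc_1$, so
\[
\tau^X_{d-1}[\Oc_X] = \tfrac12 \cc_1(\Tan_X) \otimes 1 \in \CHQ{d-1}{X},
\]
where, as is standard for smooth varieties, we identify $\cc_1(\Tan_X) \in \CH^1(X)$ with its homological image $\cc_1(\Tan_X)\cap[X] \in \CH_{d-1}(X)$. Doubling and reducing modulo two delivers $\Squu^X[X] = \ti{\cc_1(\Tan_X)}$. The argument is essentially bookkeeping; the only pitfall is being careful with the cohomological-versus-homological convention for $\cc_1$, and with the identification $\Delta^*[\Tan_{X\times X}] - [\Tan_X] = [\Tan_X]$ that makes \eqref{smooth} collapse to the expected $\Todd(\Tan_X)$.
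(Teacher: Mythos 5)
Your proof is correct and follows essentially the same route as the paper: the paper's proof of Proposition~\ref{prop:wu} simply points to the proof of Lemma~\ref{lemm:reg}, which applies Theorem~\ref{th:RR}~\eqref{smooth} to a closed embedding of $X$ into a smooth variety and reads off the degree-one Todd term $\tfrac12\cc_1$, exactly as you do (your identity-embedding variant is literally that argument specialized to $M=X$, $N=0$, and the diagonal gives the same class $[\Tan_X]$).
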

\begin{proof}
  This follows from the proof of Lemma~\ref{lemm:reg}.
\end{proof}

\begin{remark}[cohomological operation]
  \label{rem:coh}
For a smooth variety $X$, define 
\[
\Squ_X \colon \Ch^\bullet(X) \to \Cht^{\bullet +1}(X), \quad x \mapsto \Squu^X(x) + \ti{\cc_1(\Tan_X)}\cdot \ti{x}.
\]
One can prove that $\Squ$ commutes with pull-backs along arbitrary morphisms of smooth varieties. The formula $\Squ_X(x\cdot y)=\ti{x} \cdot \Squ_X(y) + \Squ_X(x) \cdot \ti{y}$ then follows from Proposition~\ref{prop:extsquu}. Using  the fact that the group $\CH^1(X)$ is generated by regularly embedded subvarieties when $X$ is smooth, we get the formula $\Squ_X(x)=\ti{x}^2$ for $x \in \Ch^1(X)$. 

This operation would have have been sufficient to prove Theorem~\ref{th:i1} below, and could also have been constructed directly using a simpler form of the Riemann-Roch theorem instead of Theorem~\ref{th:RR}. However it is not clear how to reconstruct the homological operation on singular varieties from the cohomological one.
\end{remark}

We now consider a variety $X$ over a field $F$, and a field extension $\ba{F}/F$ such that variety $\ba{X}=X_{\ba{F}}$ has a torsion-free Chow group. Examples of such varieties include those $X$ such that $\ba{X}$ is cellular. In particular $X$ could be a projective homogeneous variety under a semi-simple algebraic group, and $\ba{F}/F$ an algebraic closure. 

Since $\Squu$ commutes with field extensions by \eqref{eq:extension} and Theorem~\ref{th:RR}, \eqref{fieldext}, we have a commutative diagram, where vertical arrows are pull-backs along the flat morphism $\ba{X} \to X$,
\[
\xymatrix{ 
\Ch(X) \ar[rr]^{\Squu^X} \ar[d] && \Cht(X) \ar[d]\\ 
\Ch(\ba{X}) \ar[rr]_{\Squu^{\ba{X}}} && \Cht(\ba{X}). 
  }
  \]
  Note that $\Ch(\ba{X})\simeq \Cht(\ba{X})$, hence the operation $\Squu^{\ba{X}}$ is an endomorphism of $\Ch(\ba{X})$ which preserves rationality, \emph{i.e.\ }induces an endomorphism of the image of the map $\Ch(X) \to \Ch(\ba{X})$.

\section{Parity of the first Witt index of a quadratic form}
In this section we prove a consequence of Hoffmann's Conjecture, over a field of arbitrary characteristic. The proof is modeled upon the method used in \cite{Kar-On-2003}.\\

Let $X$ be a smooth projective quadric over a field $F$, and $\ba{F}/F$ a splitting field extension for $X$, \emph{i.e.\ }an extension such that the quadric $\ba{X}=X_{\ba{F}}$ contains a projective space of the maximal possible dimension $d$ (the number $d$ is the greatest integer such that $2d \leq \dim X$). The variety $X$ and the field extension $\ba{F}/F$ satisfy the assumption of the previous section. Indeed a basis for the free abelian group $\CH(\ba{X})$ is given by elements $h^i, l_i$ with $0 \leq i \leq d$, see for example \cite[Proposition~68.1]{EKM}. The cycle $h\in \CH^1(\ba{X})$ is the hyperplane class, and $l_i \in \CH_i(\ba{X})$ is the class of a projective subspace of dimension $i$.

A cycle in $\Ch(\ba{X}\times \ba{X})$ is said to be \emph{rational} if it is the restriction of a cycle in $\Ch(X\times X)$.

\begin{lemma}
 \label{lemm:compute_Sq1}
 Let $X$ be a smooth projective quadric, and $\ba{X}$ the restriction of $X$ to a splitting field. Then we have in $\Ch_{i-1}(\ba{X})$
  \[
  \Squu^{\ba{X}}(l_i) = (i+1)\cdot l_{i-1}.
  \]
\end{lemma}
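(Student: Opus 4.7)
The plan is to realise $l_i$ as the proper push-forward of the fundamental class of a projective subspace sitting inside $\ba X$, and then transport the computation to $\PP^i$, where the Wu formula of Proposition~\ref{prop:wu} makes the Steenrod square explicit.

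More precisely, by the description of the basis recalled just before the lemma, $l_i$ is by definition the class of a linear subspace $\Lambda \cong \PP^i$ contained in $\ba X$. Let $j \colon \Lambda \hookrightarrow \ba X$ be the closed embedding; then $l_i = j_*[\Lambda]$ in $\CH_i(\ba X)$, hence also in $\Ch_i(\ba X)$ after reduction modulo two. Since $j$ is proper, the naturality of $\Squu$ with respect to proper push-forwards (part of its construction in the previous section) gives
\[
\Squu^{\ba X}(l_i) = j_*\,\Squu^{\Lambda}[\Lambda].
\]

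Now I would apply Proposition~\ref{prop:wu} to the smooth variety $\Lambda \cong \PP^i$, which yields $\Squu^{\Lambda}[\Lambda] = \ti{\cc_1(\Tan_{\Lambda})}$. The Euler sequence on projective space gives $\cc_1(\Tan_{\PP^i}) = (i+1)\, h_\Lambda$, where $h_\Lambda \in \CH^1(\Lambda)$ is the hyperplane class, represented by a linear subspace $\PP^{i-1} \subset \Lambda$. The image of such a $\PP^{i-1}$ under $j$ is a projective subspace of dimension $i-1$ in $\ba X$, whose class is $l_{i-1}$ by definition of the basis. Combining everything:
\[
\Squu^{\ba X}(l_i) = j_*\bigl((i+1)\,\ti{h_\Lambda}\bigr) = (i+1)\cdot l_{i-1} \in \Cht_{i-1}(\ba X).
\]

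The argument is almost entirely bookkeeping once the right presentation of $l_i$ is chosen, so I do not expect a serious obstacle. The only point that deserves a line of care is the identification $j_*[\PP^{i-1}] = l_{i-1}$: for $i-1 < d$ all classes of projective subspaces of that dimension coincide in $\CH_{i-1}(\ba X)$ (see the basis description referenced via \cite[Proposition~68.1]{EKM}), so this equality is unambiguous, and the formula holds in all cases $i \geq 1$ (while for $i=0$ both sides are trivially zero, $l_{-1}$ being understood as $0$).
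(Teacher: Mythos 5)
Your proof is correct and follows essentially the same route as the paper: realize $l_i$ as $j_*[\PP^i]$, use compatibility of $\Squu$ with proper push-forward, apply Proposition~\ref{prop:wu} to $\PP^i$, and compute $\cc_1(\Tan_{\PP^i})=(i+1)\cdot h$ (the paper cites \cite[Example~104.20]{EKM} where you invoke the Euler sequence, but this is the same fact). No gaps.
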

\begin{proof} 
  Let $j \colon \PP^i \hookrightarrow \ba{X}$ be a closed embedding representing the cycle $l_i$. Then since $j_* \circ \Squu^{\PP^i}=\Squu^{\ba{X}} \circ j_*$, and by Proposition~\ref{prop:wu} we have
  \[
  \Squu^{\ba{X}}(l_i) = j_* \circ \cc_1(\Tan_{\PP^i}).
  \]
  By \cite[Example~104.20]{EKM}, this is equal to 
  \[
  j_* \circ \cc_1\big( (i+1)\cdot [\Oc(1)] -[\Oc_{\PP^i}]\big)= j_* \big( (i+1)\cdot [\PP^{i-1}]\big)=(i+1) \cdot l_{i-1}.\qedhere
  \]
    \end{proof}

\begin{theorem}
  \label{th:i1}
Let $\varphi$ be an anisotropic non-degenerate quadratic form over an arbitrary  field.  Let $\iu$ be the first Witt index of $\varphi$. If $\dim \varphi - \iu $ is odd then $\iu=1$.
\end{theorem}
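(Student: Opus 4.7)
Set $n=\dim X=\dim \varphi-2$, and assume for contradiction that $\iu\geq 2$ while $n-\iu$ is odd. Over an algebraic closure $\ba{F}/F$ (which splits $X$), write $\{h^i,l_i\}_{0\leq i\leq\lfloor n/2\rfloor}$ for the standard basis of $\Ch(\ba{X})$.

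Since $\iu\geq 2$, the quadric $X_{F(X)}$ contains a projective $(\iu-1)$-space; its closure in $X\times X$ via the second projection gives a rational cycle $\sigma\in\Ch_{n+\iu-1}(X\times X)$ whose K\"unneth decomposition
\[
\sigma_{\ba{F}}=l_{\iu-1}\times[\ba{X}]+\text{(other terms)}
\]
has no further summand of the form $l_*\times[\ba{X}]$, as read off from the generic fiber of the second projection. Now apply $\Squu^{X\times X}$: by the external product formula of Proposition~\ref{prop:extsquu}, Lemma~\ref{lemm:compute_Sq1} (i.e.\ $\Squu(l_i)=(i+1)\cdot l_{i-1}$), Proposition~\ref{prop:wu} with $\cc_1(\Tan_{\ba{X}})=n\cdot h$, and the Wu-type identity $\Squu(h^k)=(n+k)\cdot h^{k+1}\pmod 2$ derived from Remark~\ref{rem:coh}, the principal summand contributes
\[
\Squu(l_{\iu-1}\times[\ba{X}])=\iu\cdot(l_{\iu-2}\times[\ba{X}])+n\cdot(l_{\iu-1}\times h)\pmod 2.
\]
The parity hypothesis forces exactly one of $\iu$ and $n$ to be odd, so at least one of these two summands survives with coefficient $1$.

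To conclude, the plan is to pull the resulting rational cycle back along the diagonal $\Delta\colon X\to X\times X$ (which preserves rationality under scalar extension), and to track carefully the Steenrod images of the remaining K\"unneth summands of $\sigma_{\ba{F}}$ (namely $h^i\times l_{i+\iu-1}$ and $l_{j+\iu-1}\times h^j$ with $i,j\geq 1$, together with $h^i\times h^j$ where $i+j=n-\iu+1$, $j\geq 1$). This reduces the problem to producing a rational cycle in $\Cht(\ba{X})$ containing a non-trivial $l_k$ component for some $0\leq k\leq\iu-2$. Since $\varphi$ is anisotropic, no $l_k$ with $k\geq 0$ is rational on $\ba{X}$, giving the desired contradiction. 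The main obstacle is the bookkeeping in this last step: verifying that no cancellation from the Steenrod images of the secondary K\"unneth summands annihilates the principal contribution, and that the final class pulled back via the diagonal is genuinely a non-zero rational multiple of $l_{\iu-2}$ in $\Cht(\ba{X})$.
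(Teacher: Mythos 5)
Your overall strategy is the right one and is the same in spirit as the paper's (which follows Karpenko's method): produce a rational cycle on $X\times X$ witnessing the $(\iu-1)$-plane over $F(X)$, apply $\Squu^{\ba{X}\times\ba{X}}$ using Proposition~\ref{prop:extsquu}, Lemma~\ref{lemm:compute_Sq1} and $\cc_1(\Tan_{\ba{X}})\equiv \dim\varphi\cdot h$, and extract a parity constraint relating $\iu$ and $\dim\varphi$. Your local computations (the value of $\Squu$ on $l_i$, on $h^k$, and on the principal summand) are correct modulo $2$. But the step you defer as ``bookkeeping'' is not bookkeeping: it is the entire content of the argument, and as set up it does not close. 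The closure cycle $\sigma$ you take is not minimal, so you have no control whatsoever over its secondary K\"unneth summands $h^i\times l_{i+\iu-1}$, $l_{j+\iu-1}\times h^j$ ($i,j\ge 1$), and their $\Squu$-images can perfectly well contain $l_{\iu-2}\times h^0$ or $l_{\iu-1}\times h^1$ and cancel your principal contribution. Worse, pulling back along the diagonal destroys exactly the information you need: $\Delta^*(h^i\times l_{i+\iu-2})=l_{\iu-2}$ for \emph{every} $i$, so the coefficient of $l_{\iu-2}$ in $\Delta^*\Squu(\sigma)$ is a sum over all shells and can be even for trivial reasons; you cannot isolate the coefficient of a single basis element this way.

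The paper resolves precisely this difficulty by importing the machinery of shells and primordial cycles from \cite[Ch.~XIII]{EKM}. Instead of the closure cycle it takes the $1$-primordial cycle $\pi=h^0\times l_{\iu-1}+l_{\iu-1}\times h^0+v$, whose \emph{minimality} (\cite[Lemma~73.15]{EKM}) guarantees that $v$ does not contain $h^0\times l_{\iu-1}$ or $l_{\iu-1}\times h^0$, while the shell-triangle constraint (\cite[Lemma~73.12]{EKM}) excludes $h^1\times l_{\iu}$ and $l_{\iu}\times h^1$ from $\pi$; Proposition~\ref{prop:extsquu} then shows $\Squu(v)$ avoids all four of $h^0\times l_{\iu-2}$, $l_{\iu-2}\times h^0$, $h^1\times l_{\iu-1}$, $l_{\iu-1}\times h^1$. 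Finally, rather than a diagonal pullback, \cite[Corollary~73.21]{EKM} is applied directly to the rational cycle $\Squu(\pi)$ on $\ba{X}\times\ba{X}$ to force $\iu\equiv\dim\varphi\pmod 2$. Unless you supply substitutes for these three inputs (minimality, the shell-triangle exclusion, and the coefficient relation for rational cycles on $\ba{X}\times\ba{X}$), your argument has a genuine gap at its decisive step. Your closing claim that no $l_k$ is rational on an anisotropic quadric is fine (multiply by $h^k$ and invoke Springer's theorem), but the reduction to that claim is what is missing.
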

\begin{proof}
  Let $F$ be the ground field, $X$ be the smooth projective quadric of $\varphi$, and $\ba{F}/F$ a splitting field extension for $X$. We assume that $\iu \neq 1$. We use the notion of a cycle contained in another (\cite[p.313]{EKM}). Let $\pi \in \Ch(\ba{X}\times \ba{X})$ be the $1$-primordial cycle (\cite[p.323]{EKM}). It is the ``minimal'' (\cite[Definition~73.5]{EKM}) rational cycle containing $h^0 \times l_{\iu-1}$. Write 
  \[
  \pi=h^0 \times l_{\iu-1}+l_{\iu-1} \times h^0 + v.
  \]
  Then $v \in \Ch(\ba{X}\times \ba{X})$ does not contain $h^0 \times l_{\iu-1}$ nor $l_{\iu-1} \times h^0$ by \cite[Lemma~73.15]{EKM}. Also the rational cycle $\pi$, hence $v$, does not contain $h^1 \times l_{\iu}$ nor $l_{\iu} \times h^1$ (these points lie outside of the ``shell triangles'', \emph{i.e.\ }are forbidden by \cite[Lemma~73.12]{EKM}). Using Proposition~\ref{prop:extsquu}, we see that the cycle $\Squu^{\ba{X}\times \ba{X}}(v)$ does not contain any of the cycles $h^0 \times l_{\iu-2}$, $h^1 \times l_{\iu-1}$, $l_{\iu-2}\times h^0$, $l_{\iu-1} \times h^1$.\\
 
  Next we compute $\Squu^{\ba{X}\times \ba{X}}(\pi)$, which is rational by the considerations of the previous section.  We have, by Proposition~\ref{prop:extsquu}, Lemma~\ref{lemm:compute_Sq1} and Proposition~\ref{prop:wu}, 
  \[
  \Squu^{\ba{X}\times \ba{X}}(h^0 \times l_{\iu-1})=h^0 \times \Squu^{\ba{X}}(l_{\iu-1}) + \cc_1(\Tan_X) \times l_{\iu-1}=\iu \cdot (h^0 \times l_{\iu-2}) + \cc_1(\Tan_X) \times l_{\iu-1}.
  \]
By \cite[Lemma~78.1]{EKM}, we know that the modulo two total Chern class of the tangent bundle $\Tan_X$ is $(1+h)^{\dim X +2}$, hence $\cc_1(\Tan_X)=(\dim X + 2)\cdot h^1$. This gives 
\[
\Squu^{\ba{X}\times \ba{X}}(\pi)= \iu \cdot (h^0 \times l_{\iu-2}+l_{\iu-2} \times h^0) + (\dim \varphi)\cdot (h^1 \times l_{\iu-1}+l_{\iu-1} \times h^1) + \Squu^{\ba{X}\times \ba{X}}(v). 
\]
Now by \cite[Corollary~73.21]{EKM} and the property of $\Squu^{\ba{X}\times \ba{X}}(v)$ mentioned above, we see that $\dim \varphi$ and $\iu$ must have the same parity.
\end{proof}

\bibliographystyle{alpha}

\begin{thebibliography}{1}

\bibitem[Ada61]{Ada-On-61}
J. Adams.
\newblock On {C}hern characters and the structure of the unitary group.
\newblock {\em Proc. Cambridge Philos. Soc.}, 57:189--199, 1961.

\bibitem[Boi08]{Boi-A-08}
A. Boisvert.
\newblock A new definition of the {S}teenrod operations in algebraic geometry.
\newblock {\em preprint}, 2008.
\newblock \url{http://arxiv.org/abs/0805.1414}.

\bibitem[Bou06]{Bou-Ac-47}
N. Bourbaki.
\newblock {\em \'El\'ements de math\'ematique. Alg\`ebre commutative. Chapitres
  5 \`a 7}.
\newblock Springer-Verlag, Berlin, 2006.
\newblock Reprint of the 1985 original.

\bibitem[Bou07]{Bou-AC-10}
N. Bourbaki.
\newblock {\em \'{E}l\'ements de math\'ematique. {A}lg\`ebre commutative.
  {C}hapitre 10}.
\newblock Springer-Verlag, Berlin, 2007.
\newblock Reprint of the 1998 original.

\bibitem[Bro03]{Bro-St-03}
P. Brosnan.
\newblock Steenrod operations in {C}how theory.
\newblock {\em Trans. Amer. Math. Soc.}, 355(5):1869--1903 (electronic), 2003.

\bibitem[EKM08]{EKM}
R. Elman, N. Karpenko, and A. Merkurjev.
\newblock {\em The algebraic and geometric theory of quadratic forms},
  volume~56 of {\em American Mathematical Society Colloquium Publications}.
\newblock American Mathematical Society, Providence, RI, 2008.

\bibitem[Ful98]{Ful-In-98}
W. Fulton.
\newblock {\em Intersection theory}, volume~2 of {\em Ergebnisse der Mathematik
  und ihrer Grenzgebiete. 3. Folge. A Series of Modern Surveys in Mathematics
  [Results in Mathematics and Related Areas. 3rd Series. A Series of Modern
  Surveys in Mathematics]}.
\newblock Springer-Verlag, Berlin, second edition, 1998.

\bibitem[Gil05]{Gil-K-05}
H. Gillet.
\newblock {$K$}-theory and intersection theory.
\newblock In {\em Handbook of {$K$}-theory. {V}ol. 1, 2}, pages 235--293.
  Springer, Berlin, 2005.

\bibitem[Gro61]{ega-2}
A. Grothendieck.
\newblock \'{E}l\'ements de g\'eom\'etrie alg\'ebrique. {II}. \'{E}tude globale
  \'el\'ementaire de quelques classes de morphismes.
\newblock {\em Inst. Hautes \'Etudes Sci. Publ. Math.}, (8):222, 1961.

\bibitem[Kar90]{Kar-90}
N. Karpenko.
\newblock Algebro-geometric invariants of quadratic forms.
\newblock {\em Algebra i Analiz}, 2(1):141--162, 1990.

\bibitem[Kar03]{Kar-On-2003}
N. Karpenko.
\newblock On the first {W}itt index of quadratic forms.
\newblock {\em Invent. Math.}, 153(2):455--462, 2003.

\bibitem[Lev07]{Lev-St-05}
M. Levine.
\newblock Steenrod operations, degree formulas and algebraic cobordism.
\newblock {\em Pure Appl. Math. Q.}, 3(1, part 3):283--306, 2007.

\bibitem[Voe03]{Vo-03}
V. Voevodsky.
\newblock Reduced power operations in motivic cohomology.
\newblock {\em Publ. Math. Inst. Hautes \'Etudes Sci.}, (98):1--57, 2003.

\end{thebibliography}

\end{document}